\newtheorem{theorem}{Theorem}
\newtheorem{axiom}[theorem]{Axiom}
\newtheorem{conjecture}[theorem]{Conjecture}
\newtheorem{corollary}[theorem]{Corollary}
\newtheorem{definition}[theorem]{Definition}
\newtheorem{example}[theorem]{Example}
\newtheorem{exercise}[theorem]{Exercise}
\newtheorem{lemma}[theorem]{Lemma}
\newtheorem{proposition}[theorem]{Proposition}
\newtheorem{remark}[theorem]{Remark}
\newenvironment{proof}[1][Proof]{\noindent\textbf{#1.} }{\ \rule{0.5em}{0.5em}}
\let\pdfoutput=\undefined\fi
\chardef\@x10\chardef\@xv60
\def\tcitime{
\def\@time{%
  \@minute\time\@hour\@minute\divide\@hour\@xv
  \ifnum\@hour<\@x 0\fi\the\@hour:%
  \multiply\@hour\@xv\advance\@minute-\@hour
  \ifnum\@minute<\@x 0\fi\the\@minute
  }}%
\def\x@hyperref#1#2#3{%
   % Turn off various catcodes before reading parameter 4
   \catcode`\~ = 12
   \catcode`\$ = 12
   \catcode`\_ = 12
   \catcode`\# = 12
   \catcode`\& = 12
   \y@hyperref{#1}{#2}{#3}%
}
\def\y@hyperref#1#2#3#4{%
   #2\ref{#4}#3
   \catcode`\~ = 13
   \catcode`\$ = 3
   \catcode`\_ = 8
   \catcode`\# = 6
   \catcode`\& = 4
}
\def\QCTOpt[#1]#2{%
  \def\QCTOptB{#1}
  \def\QCTOptA{#2}
}
\def\QCTNOpt#1{%
  \def\QCTOptA{#1}
  \let\QCTOptB\empty
}
\def\Qct{%
  \@ifnextchar[{%
    \QCTOpt}{\QCTNOpt}
}
\def\QCBOpt[#1]#2{%
  \def\QCBOptB{#1}%
  \def\QCBOptA{#2}%
}
\def\QCBNOpt#1{%
  \def\QCBOptA{#1}%
  \let\QCBOptB\empty
}
\def\Qcb{%
  \@ifnextchar[{%
    \QCBOpt}{\QCBNOpt}%
}
\def\PrepCapArgs{%
  \ifx\QCBOptA\empty
    \ifx\QCTOptA\empty
      {}%
    \else
      \ifx\QCTOptB\empty
        {\QCTOptA}%
      \else
        [\QCTOptB]{\QCTOptA}%
      \fi
    \fi
  \else
    \ifx\QCBOptA\empty
      {}%
    \else
      \ifx\QCBOptB\empty
        {\QCBOptA}%
      \else
        [\QCBOptB]{\QCBOptA}%
      \fi
    \fi
  \fi
}
\def\GRAPHICSPS#1{%
 \ifcase\GRAPHICSTYPE%\GRAPHICSTYPE=0
   \special{ps: #1}%
 \or%\GRAPHICSTYPE=1
   \special{language "PS", include "#1"}%
%%%\or%\GRAPHICSTYPE=2
%%%  #1%
 \fi
}%
\def\graffile#1#2#3#4{%
%%% \ifnum\GRAPHICSTYPE=\tw@
%%%  %Following if using psfig
%%%  \@ifundefined{psfig}{\input psfig.tex}{}%
%%%  \psfig{file=#1, height=#3, width=#2}%
%%% \else
  %Following for all others
  % JCS - added BOXTHEFRAME, see below
    \bgroup
	   \@inlabelfalse
       \leavevmode
       \@ifundefined{bbl@deactivate}{\def~{\string~}}{\activesoff}%
        \raise -#4 \BOXTHEFRAME{%
           \hbox to #2{\raise #3\hbox to #2{\null #1\hfil}}}%
    \egroup
}%
\def\draftbox#1#2#3#4{%
 \leavevmode\raise -#4 \hbox{%
  \frame{\rlap{\protect\tiny #1}\hbox to #2%
   {\vrule height#3 width\z@ depth\z@\hfil}%
  }%
 }%
}%
\let\nographics=\@msidraft
\newif\ifwasdraft
\def\GRAPHIC#1#2#3#4#5{%
   \ifnum\@msidraft=\@ne\draftbox{#2}{#3}{#4}{#5}%
   \else\graffile{#1}{#3}{#4}{#5}%
   \fi
}
\def\addtoLaTeXparams#1{%
    \edef\LaTeXparams{\LaTeXparams #1}}%
\newif\ifBoxFrame \BoxFramefalse
\newif\ifOverFrame \OverFramefalse
\newif\ifUnderFrame \UnderFramefalse
\def\BOXTHEFRAME#1{%
   \hbox{%
      \ifBoxFrame
         \frame{#1}%
      \else
         {#1}%
      \fi
   }%
}
\def\doFRAMEparams#1{\BoxFramefalse\OverFramefalse\UnderFramefalse\readFRAMEparams#1\end}%
\def\readFRAMEparams#1{%
 \ifx#1\end%
  \let\next=\relax
  \else
  \ifx#1i\dispkind=\z@\fi
  \ifx#1d\dispkind=\@ne\fi
  \ifx#1f\dispkind=\tw@\fi
  \ifx#1t\addtoLaTeXparams{t}\fi
  \ifx#1b\addtoLaTeXparams{b}\fi
  \ifx#1p\addtoLaTeXparams{p}\fi
  \ifx#1h\addtoLaTeXparams{h}\fi
  \ifx#1X\BoxFrametrue\fi
  \ifx#1O\OverFrametrue\fi
  \ifx#1U\UnderFrametrue\fi
  \ifx#1w
    \ifnum\@msidraft=1\wasdrafttrue\else\wasdraftfalse\fi
    \@msidraft=\@ne
  \fi
  \let\next=\readFRAMEparams
  \fi
 \next
 }%
\def\IFRAME#1#2#3#4#5#6{%
      \bgroup
      \let\QCTOptA\empty
      \let\QCTOptB\empty
      \let\QCBOptA\empty
      \let\QCBOptB\empty
      #6%
      \parindent=0pt
      \leftskip=0pt
      \rightskip=0pt
      \setbox0=\hbox{\QCBOptA}%
      \@tempdima=#1\relax
      \ifOverFrame
          % Do this later
          \typeout{This is not implemented yet}%
          \show\HELP
      \else
         \ifdim\wd0>\@tempdima
            \advance\@tempdima by \@tempdima
            \ifdim\wd0 >\@tempdima
               \setbox1 =\vbox{%
                  \unskip\hbox to \@tempdima{\hfill\GRAPHIC{#5}{#4}{#1}{#2}{#3}\hfill}%
                  \unskip\hbox to \@tempdima{\parbox[b]{\@tempdima}{\QCBOptA}}%
               }%
               \wd1=\@tempdima
            \else
               \textwidth=\wd0
               \setbox1 =\vbox{%
                 \noindent\hbox to \wd0{\hfill\GRAPHIC{#5}{#4}{#1}{#2}{#3}\hfill}\\%
                 \noindent\hbox{\QCBOptA}%
               }%
               \wd1=\wd0
            \fi
         \else
            \ifdim\wd0>0pt
              \hsize=\@tempdima
              \setbox1=\vbox{%
                \unskip\GRAPHIC{#5}{#4}{#1}{#2}{0pt}%
                \break
                \unskip\hbox to \@tempdima{\hfill \QCBOptA\hfill}%
              }%
              \wd1=\@tempdima
           \else
              \hsize=\@tempdima
              \setbox1=\vbox{%
                \unskip\GRAPHIC{#5}{#4}{#1}{#2}{0pt}%
              }%
              \wd1=\@tempdima
           \fi
         \fi
         \@tempdimb=\ht1
         %\advance\@tempdimb by \dp1
         \advance\@tempdimb by -#2
         \advance\@tempdimb by #3
         \leavevmode
         \raise -\@tempdimb \hbox{\box1}%
      \fi
      \egroup%
}%
\def\DFRAME#1#2#3#4#5{%
  \vspace\topsep
  \hfil\break
  \bgroup
     \leftskip\@flushglue
	 \rightskip\@flushglue
	 \parindent\z@
	 \parfillskip\z@skip
     \let\QCTOptA\empty
     \let\QCTOptB\empty
     \let\QCBOptA\empty
     \let\QCBOptB\empty
	 \vbox\bgroup
        \ifOverFrame 
           #5\QCTOptA\par
        \fi
        \GRAPHIC{#4}{#3}{#1}{#2}{\z@}%
        \ifUnderFrame 
           \break#5\QCBOptA
        \fi
	 \egroup
  \egroup
  \vspace\topsep
  \break
}%
\def\FFRAME#1#2#3#4#5#6#7{%
 %If float.sty loaded and float option is 'h', change to 'H'  (gp) 1998/09/05
  \@ifundefined{floatstyle}
    {%floatstyle undefined (and float.sty not present), no change
     \begin{figure}[#1]%
    }
    {%floatstyle DEFINED
	 \ifx#1h%Only the h parameter, change to H
      \begin{figure}[H]%
	 \else
      \begin{figure}[#1]%
	 \fi
	}
  \let\QCTOptA\empty
  \let\QCTOptB\empty
  \let\QCBOptA\empty
  \let\QCBOptB\empty
  \ifOverFrame
    #4
    \ifx\QCTOptA\empty
    \else
      \ifx\QCTOptB\empty
        \caption{\QCTOptA}%
      \else
        \caption[\QCTOptB]{\QCTOptA}%
      \fi
    \fi
    \ifUnderFrame\else
      \label{#5}%
    \fi
  \else
    \UnderFrametrue%
  \fi
  \begin{center}\GRAPHIC{#7}{#6}{#2}{#3}{\z@}\end{center}%
  \ifUnderFrame
    #4
    \ifx\QCBOptA\empty
      \caption{}%
    \else
      \ifx\QCBOptB\empty
        \caption{\QCBOptA}%
      \else
        \caption[\QCBOptB]{\QCBOptA}%
      \fi
    \fi
    \label{#5}%
  \fi
  \end{figure}%
 }%
\def\makeactives{
  \catcode`\"=\active
  \catcode`\;=\active
  \catcode`\:=\active
  \catcode`\'=\active
  \catcode`\~=\active
}
   \gdef\activesoff{%
      \def"{\string"}%
      \def;{\string;}%
      \def:{\string:}%
      \def'{\string'}%
      \def~{\string~}%
      %\bbl@deactivate{"}%
      %\bbl@deactivate{;}%
      %\bbl@deactivate{:}%
      %\bbl@deactivate{'}%
    }
\def\FRAME#1#2#3#4#5#6#7#8{%
 \bgroup
 \ifnum\@msidraft=\@ne
   \wasdrafttrue
 \else
   \wasdraftfalse%
 \fi
 \def\LaTeXparams{}%
 \dispkind=\z@
 \def\LaTeXparams{}%
 \doFRAMEparams{#1}%
 \ifnum\dispkind=\z@\IFRAME{#2}{#3}{#4}{#7}{#8}{#5}\else
  \ifnum\dispkind=\@ne\DFRAME{#2}{#3}{#7}{#8}{#5}\else
   \ifnum\dispkind=\tw@
    \edef\@tempa{\noexpand\FFRAME{\LaTeXparams}}%
    \@tempa{#2}{#3}{#5}{#6}{#7}{#8}%
    \fi
   \fi
  \fi
  \ifwasdraft\@msidraft=1\else\@msidraft=0\fi{}%
  \egroup
 }%
\def\TEXUX#1{"texux"}
\long\def\QQQ#1#2{%
     \long\expandafter\def\csname#1\endcsname{#2}}%
\long\def\QQA#1#2{}%
\def\QTR#1#2{{\csname#1\endcsname {#2}}}%
\def\EXPAND#1[#2]#3{}%
\def\NOEXPAND#1[#2]#3{}%
\def\LaTeXparent#1{}%
\def\ChildStyles#1{}%
\def\ChildDefaults#1{}%
\def\QTagDef#1#2#3{}%
  \providecommand{\UNICODE}[2][]{\protect\rule{.1in}{.1in}}
  \providecommand{\U}[1]{\protect\rule{.1in}{.1in}}
\def\QQfnmark#1{\footnotemark}
 \def\abstract{%
  \if@twocolumn
   \section*{Abstract (Not appropriate in this style!)}%
   \else \small 
   \begin{center}{\bf Abstract\vspace{-.5em}\vspace{\z@}}\end{center}%
   \quotation 
   \fi
  }%
   \def\registered{\relax\ifmmode{}\r@gistered
                    \else$\m@th\r@gistered$\fi}%
 \def\r@gistered{^{\ooalign
  {\hfil\raise.07ex\hbox{$\scriptstyle\rm\text{R}$}\hfil\crcr
  \mathhexbox20D}}}}{}%
\newdimen\theight
\def\newfmtname{LaTeX2e}
  \DeclareOldFontCommand{\rm}{\normalfont\rmfamily}{\mathrm}
  \DeclareOldFontCommand{\sf}{\normalfont\sffamily}{\mathsf}
  \DeclareOldFontCommand{\tt}{\normalfont\ttfamily}{\mathtt}
  \DeclareOldFontCommand{\bf}{\normalfont\bfseries}{\mathbf}
  \DeclareOldFontCommand{\it}{\normalfont\itshape}{\mathit}
  \DeclareOldFontCommand{\sl}{\normalfont\slshape}{\@nomath\sl}
  \DeclareOldFontCommand{\sc}{\normalfont\scshape}{\@nomath\sc}
\def\alpha{{\Greekmath 010B}}%
\def\beta{{\Greekmath 010C}}%
\def\gamma{{\Greekmath 010D}}%
\def\delta{{\Greekmath 010E}}%
\def\epsilon{{\Greekmath 010F}}%
\def\zeta{{\Greekmath 0110}}%
\def\eta{{\Greekmath 0111}}%
\def\theta{{\Greekmath 0112}}%
\def\iota{{\Greekmath 0113}}%
\def\kappa{{\Greekmath 0114}}%
\def\lambda{{\Greekmath 0115}}%
\def\mu{{\Greekmath 0116}}%
\def\nu{{\Greekmath 0117}}%
\def\xi{{\Greekmath 0118}}%
\def\pi{{\Greekmath 0119}}%
\def\rho{{\Greekmath 011A}}%
\def\sigma{{\Greekmath 011B}}%
\def\tau{{\Greekmath 011C}}%
\def\upsilon{{\Greekmath 011D}}%
\def\phi{{\Greekmath 011E}}%
\def\chi{{\Greekmath 011F}}%
\def\psi{{\Greekmath 0120}}%
\def\omega{{\Greekmath 0121}}%
\def\varepsilon{{\Greekmath 0122}}%
\def\vartheta{{\Greekmath 0123}}%
\def\varpi{{\Greekmath 0124}}%
\def\varrho{{\Greekmath 0125}}%
\def\varsigma{{\Greekmath 0126}}%
\def\varphi{{\Greekmath 0127}}%
\def\nabla{{\Greekmath 0272}}
\def\FindBoldGroup{%
   {\setbox0=\hbox{$\mathbf{x\global\edef\theboldgroup{\the\mathgroup}}$}}%
}
\def\Greekmath#1#2#3#4{%
    \if@compatibility
        \ifnum\mathgroup=\symbold
           \mathchoice{\mbox{\boldmath$\displaystyle\mathchar"#1#2#3#4$}}%
                      {\mbox{\boldmath$\textstyle\mathchar"#1#2#3#4$}}%
                      {\mbox{\boldmath$\scriptstyle\mathchar"#1#2#3#4$}}%
                      {\mbox{\boldmath$\scriptscriptstyle\mathchar"#1#2#3#4$}}%
        \else
           \mathchar"#1#2#3#4% 
        \fi 
    \else 
        \FindBoldGroup
        \ifnum\mathgroup=\theboldgroup % For 2e
           \mathchoice{\mbox{\boldmath$\displaystyle\mathchar"#1#2#3#4$}}%
                      {\mbox{\boldmath$\textstyle\mathchar"#1#2#3#4$}}%
                      {\mbox{\boldmath$\scriptstyle\mathchar"#1#2#3#4$}}%
                      {\mbox{\boldmath$\scriptscriptstyle\mathchar"#1#2#3#4$}}%
        \else
           \mathchar"#1#2#3#4% 
        \fi     	    
	  \fi}
\newif\ifGreekBold  \GreekBoldfalse
\let\SAVEPBF=\pbf
\def\pbf{\GreekBoldtrue\SAVEPBF}%
  \newcounter{equationnumber}  
  \def\mathletters{%
     \addtocounter{equation}{1}
     \edef\@currentlabel{\theequation}%
     \setcounter{equationnumber}{\c@equation}
     \setcounter{equation}{0}%
     \edef\theequation{\@currentlabel\noexpand\alph{equation}}%
  }
    \def\BibTeX{{\rm B\kern-.05em{\sc i\kern-.025em b}\kern-.08em
                 T\kern-.1667em\lower.7ex\hbox{E}\kern-.125emX}}}{}%
\def\AmS{{\protect\usefont{OMS}{cmsy}{m}{n}%
                A\kern-.1667em\lower.5ex\hbox{M}\kern-.125emS}}}{}%
\def\@@eqncr{\let\@tempa\relax
    \ifcase\@eqcnt \def\@tempa{& & &}\or \def\@tempa{& &}%
      \else \def\@tempa{&}\fi
     \@tempa
     \if@eqnsw
        \iftag@
           \@taggnum
        \else
           \@eqnnum\stepcounter{equation}%
        \fi
     \fi
     \global\tag@false
     \global\@eqnswtrue
     \global\@eqcnt\z@\cr}
\def\TCItag{\@ifnextchar*{\@TCItagstar}{\@TCItag}}
\def\@TCItag#1{%
    \global\tag@true
    \global\def\@taggnum{(#1)}}
\def\@TCItagstar*#1{%
    \global\tag@true
    \global\def\@taggnum{#1}}
\def\ExitTCILatex{\makeatother }
\if@compatibility\message{amsmath already loaded}\fi\aftergroup\ExitTCILatex}
\if@compatibility\message{amstex already loaded}\fi\aftergroup\ExitTCILatex}
\if@compatibility\message{amsgen already loaded}\fi\aftergroup\ExitTCILatex}
\let\DOTSI\relax
\def\RIfM@{\relax\ifmmode}%
\def\FN@{\futurelet\next}%
\def\iint{\DOTSI\intno@\tw@\FN@\ints@}%
\def\iiint{\DOTSI\intno@\thr@@\FN@\ints@}%
\def\iiiint{\DOTSI\intno@4 \FN@\ints@}%
\def\idotsint{\DOTSI\intno@\z@\FN@\ints@}%
\def\ints@{\findlimits@\ints@@}%
\newif\iflimtoken@
\newif\iflimits@
\def\findlimits@{\limtoken@true\ifx\next\limits\limits@true
 \else\ifx\next\nolimits\limits@false\else
 \limtoken@false\ifx\ilimits@\nolimits\limits@false\else
 \ifinner\limits@false\else\limits@true\fi\fi\fi\fi}%
\def\multint@{\int\ifnum\intno@=\z@\intdots@                          %1
 \else\intkern@\fi                                                    %2
 \ifnum\intno@>\tw@\int\intkern@\fi                                   %3
 \ifnum\intno@>\thr@@\int\intkern@\fi                                 %4
 \int}%                                                               %5
\def\multintlimits@{\intop\ifnum\intno@=\z@\intdots@\else\intkern@\fi
 \ifnum\intno@>\tw@\intop\intkern@\fi
 \ifnum\intno@>\thr@@\intop\intkern@\fi\intop}%
\def\intic@{%
    \mathchoice{\hskip.5em}{\hskip.4em}{\hskip.4em}{\hskip.4em}}%
\def\negintic@{\mathchoice
 {\hskip-.5em}{\hskip-.4em}{\hskip-.4em}{\hskip-.4em}}%
\def\ints@@{\iflimtoken@                                              %1
 \def\ints@@@{\iflimits@\negintic@
   \mathop{\intic@\multintlimits@}\limits                             %2
  \else\multint@\nolimits\fi                                          %3
  \eat@}%                                                             %4
 \else                                                                %5
 \def\ints@@@{\iflimits@\negintic@
  \mathop{\intic@\multintlimits@}\limits\else
  \multint@\nolimits\fi}\fi\ints@@@}%
\def\intkern@{\mathchoice{\!\!\!}{\!\!}{\!\!}{\!\!}}%
\def\plaincdots@{\mathinner{\cdotp\cdotp\cdotp}}%
\def\intdots@{\mathchoice{\plaincdots@}%
 {{\cdotp}\mkern1.5mu{\cdotp}\mkern1.5mu{\cdotp}}%
 {{\cdotp}\mkern1mu{\cdotp}\mkern1mu{\cdotp}}%
 {{\cdotp}\mkern1mu{\cdotp}\mkern1mu{\cdotp}}}%
\def\RIfM@{\relax\protect\ifmmode}
\def\text{\RIfM@\expandafter\text@\else\expandafter\mbox\fi}
\let\nfss@text\text
\def\text@#1{\mathchoice
   {\textdef@\displaystyle\f@size{#1}}%
   {\textdef@\textstyle\tf@size{\firstchoice@false #1}}%
   {\textdef@\textstyle\sf@size{\firstchoice@false #1}}%
   {\textdef@\textstyle \ssf@size{\firstchoice@false #1}}%
   \glb@settings}
\def\textdef@#1#2#3{\hbox{{%
                    \everymath{#1}%
                    \let\f@size#2\selectfont
                    #3}}}
\newif\iffirstchoice@
\def\Let@{\relax\iffalse{\fi\let\\=\cr\iffalse}\fi}%
\def\vspace@{\def\vspace##1{\crcr\noalign{\vskip##1\relax}}}%
\def\multilimits@{\bgroup\vspace@\Let@
 \baselineskip\fontdimen10 \scriptfont\tw@
 \advance\baselineskip\fontdimen12 \scriptfont\tw@
 \lineskip\thr@@\fontdimen8 \scriptfont\thr@@
 \lineskiplimit\lineskip
 \vbox\bgroup\ialign\bgroup\hfil$\m@th\scriptstyle{##}$\hfil\crcr}%
\def\Sb{_\multilimits@}%
\def\endSb{\crcr\egroup\egroup\egroup}%
\def\Sp{^\multilimits@}%
\newdimen\ex@
\def\rightarrowfill@#1{$#1\m@th\mathord-\mkern-6mu\cleaders
 \hbox{$#1\mkern-2mu\mathord-\mkern-2mu$}\hfill
 \mkern-6mu\mathord\rightarrow$}%
\def\leftarrowfill@#1{$#1\m@th\mathord\leftarrow\mkern-6mu\cleaders
 \hbox{$#1\mkern-2mu\mathord-\mkern-2mu$}\hfill\mkern-6mu\mathord-$}%
\def\leftrightarrowfill@#1{$#1\m@th\mathord\leftarrow
\mkern-6mu\cleaders
 \hbox{$#1\mkern-2mu\mathord-\mkern-2mu$}\hfill
 \mkern-6mu\mathord\rightarrow$}%
\def\overrightarrow{\mathpalette\overrightarrow@}%
\def\overrightarrow@#1#2{\vbox{\ialign{##\crcr\rightarrowfill@#1\crcr
 \noalign{\kern-\ex@\nointerlineskip}$\m@th\hfil#1#2\hfil$\crcr}}}%
\def\overleftarrow{\mathpalette\overleftarrow@}%
\def\overleftarrow@#1#2{\vbox{\ialign{##\crcr\leftarrowfill@#1\crcr
 \noalign{\kern-\ex@\nointerlineskip}$\m@th\hfil#1#2\hfil$\crcr}}}%
\def\overleftrightarrow{\mathpalette\overleftrightarrow@}%
\def\overleftrightarrow@#1#2{\vbox{\ialign{##\crcr
   \leftrightarrowfill@#1\crcr
 \noalign{\kern-\ex@\nointerlineskip}$\m@th\hfil#1#2\hfil$\crcr}}}%
\def\underrightarrow{\mathpalette\underrightarrow@}%
\def\underrightarrow@#1#2{\vtop{\ialign{##\crcr$\m@th\hfil#1#2\hfil
  $\crcr\noalign{\nointerlineskip}\rightarrowfill@#1\crcr}}}%
\def\underleftarrow{\mathpalette\underleftarrow@}%
\def\underleftarrow@#1#2{\vtop{\ialign{##\crcr$\m@th\hfil#1#2\hfil
  $\crcr\noalign{\nointerlineskip}\leftarrowfill@#1\crcr}}}%
\def\underleftrightarrow{\mathpalette\underleftrightarrow@}%
\def\underleftrightarrow@#1#2{\vtop{\ialign{##\crcr$\m@th
  \hfil#1#2\hfil$\crcr
 \noalign{\nointerlineskip}\leftrightarrowfill@#1\crcr}}}%
\def\qopnamewl@#1{\mathop{\operator@font#1}\nlimits@}
\let\nlimits@\displaylimits
\def\setboxz@h{\setbox\z@\hbox}
\def\varlim@#1#2{\mathop{\vtop{\ialign{##\crcr
 \hfil$#1\m@th\operator@font lim$\hfil\crcr
 \noalign{\nointerlineskip}#2#1\crcr
 \noalign{\nointerlineskip\kern-\ex@}\crcr}}}}
 \def\rightarrowfill@#1{\m@th\setboxz@h{$#1-$}\ht\z@\z@
  $#1\copy\z@\mkern-6mu\cleaders
  \hbox{$#1\mkern-2mu\box\z@\mkern-2mu$}\hfill
  \mkern-6mu\mathord\rightarrow$}
\def\leftarrowfill@#1{\m@th\setboxz@h{$#1-$}\ht\z@\z@
  $#1\mathord\leftarrow\mkern-6mu\cleaders
  \hbox{$#1\mkern-2mu\copy\z@\mkern-2mu$}\hfill
  \mkern-6mu\box\z@$}
\def\projlim{\qopnamewl@{proj\,lim}}
\def\injlim{\qopnamewl@{inj\,lim}}
\def\varinjlim{\mathpalette\varlim@\rightarrowfill@}
\def\varprojlim{\mathpalette\varlim@\leftarrowfill@}
\def\varliminf{\mathpalette\varliminf@{}}
\def\varliminf@#1{\mathop{\underline{\vrule\@depth.2\ex@\@width\z@
   \hbox{$#1\m@th\operator@font lim$}}}}
\def\varlimsup{\mathpalette\varlimsup@{}}
\def\varlimsup@#1{\mathop{\overline
  {\hbox{$#1\m@th\operator@font lim$}}}}
\def\align{\@verbatim \frenchspacing\@vobeyspaces \@alignverbatim
You are using the "align" environment in a style in which it is not defined.}
\let\csname endalign*\endcsname =\endtrivlist
\def\alignat{\@verbatim \frenchspacing\@vobeyspaces \@alignatverbatim
You are using the "alignat" environment in a style in which it is not defined.}
\let\csname endalignat*\endcsname =\endtrivlist
\def\xalignat{\@verbatim \frenchspacing\@vobeyspaces \@xalignatverbatim
You are using the "xalignat" environment in a style in which it is not defined.}
\let\csname endxalignat*\endcsname =\endtrivlist
\def\gather{\@verbatim \frenchspacing\@vobeyspaces \@gatherverbatim
You are using the "gather" environment in a style in which it is not defined.}
\let\csname endgather*\endcsname =\endtrivlist
\def\multiline{\@verbatim \frenchspacing\@vobeyspaces \@multilineverbatim
You are using the "multiline" environment in a style in which it is not defined.}
\let\csname endmultiline*\endcsname =\endtrivlist
\def\arrax{\@verbatim \frenchspacing\@vobeyspaces \@arraxverbatim
You are using a type of "array" construct that is only allowed in AmS-LaTeX.}
\def\tabulax{\@verbatim \frenchspacing\@vobeyspaces \@tabulaxverbatim
You are using a type of "tabular" construct that is only allowed in AmS-LaTeX.}
\let\csname endarrax*\endcsname =\endtrivlist
\let\csname endtabulax*\endcsname =\endtrivlist
 \def\endequation{%
     \ifmmode\ifinner % FLEQN hack
      \iftag@
        \addtocounter{equation}{-1} % undo the increment made in the begin part
        $\hfil
           \displaywidth\linewidth\@taggnum\egroup \endtrivlist
        \global\tag@false
        \global\@ignoretrue   
      \else
        $\hfil
           \displaywidth\linewidth\@eqnnum\egroup \endtrivlist
        \global\tag@false
        \global\@ignoretrue 
      \fi
     \else   
      \iftag@
        \addtocounter{equation}{-1} % undo the increment made in the begin part
        \eqno \hbox{\@taggnum}
        \global\tag@false%
        $$\global\@ignoretrue
      \else
        \eqno \hbox{\@eqnnum}% $$ BRACE MATCHING HACK
        $$\global\@ignoretrue
      \fi
     \fi\fi
 } 
 \newif\iftag@ \tag@false
 \def\TCItag{\@ifnextchar*{\@TCItagstar}{\@TCItag}}
 \def\@TCItag#1{%
     \global\tag@true
     \global\def\@taggnum{(#1)}}
 \def\@TCItagstar*#1{%
     \global\tag@true
     \global\def\@taggnum{#1}}
     \def\tag{\@ifnextchar*{\@tagstar}{\@tag}}
     \def\@tag#1{%
         \global\tag@true
         \global\def\@taggnum{(#1)}}
     \def\@tagstar*#1{%
         \global\tag@true
         \global\def\@taggnum{#1}}
\begin{document}

\title{A nonstandard proof for Szpilrajn's theorem}
\author{Abdelmadjid BOUDAOUD \\
Pure and Applied Mathematics Laboratory (LMPA),\\
Department of Mathematics,\\
Faculty of mathematics and Computer Sciences \\
University of M'sila, Algeria.\\
E-mail : boudaoudab@yahoo.fr }

\maketitle

\begin{abstract}
Recall that Szpilrajn (1930) (\cite{Egbert}, \cite{Karel}) states that on a given set, any partial order can be extended to a total order on the same set. In this work we give, in the context of the IST theory (\cite{Vanden},\cite{Francine},\cite{Nelson}),
a more constructive proof for this theorem. In addition, we benefit of the tools used to give some other results.

\end{abstract}

\noindent
{\bf Mathematics Subject Classification:} 03E25, 03H05, 06A05 % using a comma in order to separate the list

\smallskip
\noindent
{\bf Keywords:} Nonstandard Analysis, Axiom of Choice (AC), Total Order,
Szpilrajn's theorem, Density.  % using a comma in order to separate your list of keywords

\section{Introduction}

This paper is placed in the framework of IST (\cite{Vanden},\cite{Francine},\cite{Nelson}), where
to prove the result announced above in the abstract, the main tools are the
use, instead of an infinite set $X$, of a finite subset $F=\left\{
x_{1},x_{2},...,x_{N}\right\} \subset X$ containing all standard elements of
$X$, transfer principle and standardization principle. In addition, since we
can construct several alternatives of linear order on a given set \cite{boudaoud} , we give some other applications, which are Propositions 6, 7
and 8. Let us recall the
%  Voici un exemple d'utilisation de l'environnement notation
\noindent \textbf{Standardization principle \ }: For all formula $F\left(
Z\right) $ internal or external, we have:

\begin{equation}
\forall ^{st}x\exists ^{st}y\forall ^{st}z\left[ z\in y\iff z\in x\text{ and
}F\left( z\right) \right]  \tag{S}
\end{equation}%
where $x$ is the referential set and $y$ is a standard set which is the
standardized of $\left\{ z\in x\text{ and }F\left( z\right) \right\} $. We
put $y=\left\{ z\in x\text{ and }F\left( z\right) \right\} ^{s}$. Thus, by
using the principle (S), we can associate a standard set for any given set.
Hence, it is a principle of construction.\newline
\begin{definition}
\noindent 1) If $T_{1}$ and $T_{2}$ are subsets of a linearly ordered set,
we call $T_{1}$ strictly dense (resp. dense) in $T_{2}$, if for every two
elements $a<b$ of $T_{2}$ there exists an element $c\in T_{1}$ with $a<c<b$
(resp. $a\leq c\leq b$).\newline

\noindent 2) Let $A$ and $B$ be a given sets. $A$ is called equivalent to $B$%
, written $A\sim B$, if there exists a function $f:A\longrightarrow B$ which
is one-one and onto.\newline
\end{definition}

\noindent \textbf{Notation.} Let $n$ be a positive integer, $n\approx
+\infty $ denotes that $n$ is unlimited.\newline

We need to recall that from the main result of \cite{boudaoud} we
immediately deduce
\begin{theorem}
 Let $X$ be any infinite standard set, let $%
G=\left\{ \beta _{1},\beta _{2},...,\beta _{N}\right\} \subset X$, $\left(
N\approx +\infty \right) $ be a subset containing all the standard elements
of $X$ and let $\Omega =\left\{ \left( \beta _{i},\beta _{j}\right) _{N\geq
i\geq 1\text{ },\text{ }N\geq j\geq i}\text{ }:\beta _{i},\beta _{j}\in
G\right\} \subset X\times X$. Then, the relation $\preceq $ defined by $%
\Omega ^{s}$ from $X$ to $X$ as follows%
\begin{equation*}
x\preceq y\text{ iff }\left( x,y\right) \in \Omega ^{s}
\end{equation*}%
is a standard total ordering on $X$.
\end{theorem}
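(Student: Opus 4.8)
The plan is to run two IST principles in tandem: the standardization principle (S), which guarantees that $\Omega^s$ is a \emph{standard} subset of $X\times X$, and the transfer principle, which reduces each order axiom to the standard elements of $X$. Since $\Omega^s$ is standard, the relation $\preceq$ is standard, and each of the four defining properties of a total order (reflexivity, antisymmetry, transitivity, totality) is an \emph{internal} formula whose only parameter is the standard relation $\Omega^s$. Consequently, by transfer, I would only need to verify each property for \emph{standard} $x,y,z\in X$.

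The bridge back to $\Omega$ is the defining clause of (S): for every standard $z\in X\times X$ one has $z\in\Omega^s\iff z\in\Omega$. First I would note that if $x$ is standard then $x\in G$, so $x=\beta_i$ for some index $i$, and that an ordered pair of two standard elements is again standard; hence for standard $x=\beta_i$ and $y=\beta_j$ we get $x\preceq y\iff(\beta_i,\beta_j)\in\Omega\iff i\le j$, using that the $\beta_k$ are distinct so that a pair determines its pair of indices. With this description in hand, all four axioms collapse to elementary statements about the natural order of indices on $\{1,\dots,N\}$: $x\preceq x$ holds by taking $j=i$ (allowed since $j\ge i$); if $x\preceq y$ and $y\preceq x$ then $i\le j$ and $j\le i$, forcing $i=j$ and $x=y$; if $x\preceq y$ and $y\preceq z$ then $i\le j\le k$ yields $i\le k$, i.e.\ $x\preceq z$; and for any two standard $x,y$ either $i\le j$ or $j\le i$, giving totality. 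Transfer then upgrades each verified property from the standard elements to all of $X$.

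The hard part will be bookkeeping rather than conceptual depth. One must be careful that (i) the order axioms are genuinely internal, never invoking the standardness predicate, so that transfer legitimately applies with the standard parameter $\Omega^s$; (ii) the equivalence $z\in\Omega^s\iff z\in\Omega$ is used only at standard $z$, which is exactly where a pair $(\beta_i,\beta_j)$ of standard elements lives; and (iii) every standard element of $X$ is indeed among the $\beta_k$, which is precisely the hypothesis that $G$ contains all standard elements of $X$. The only genuinely mathematical ingredient is the triviality that $i\le j$ is a total order on the finite index set; the nonstandard machinery does the rest, transporting this finite total order into a standard total order on the whole (possibly uncountable) set $X$.
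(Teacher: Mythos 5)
Your proof is correct, but there is a wrinkle in the comparison: the paper itself contains no proof of this theorem. It is stated as an immediate consequence of the main result of \cite{boudaoud} ("from the main result of \cite{boudaoud} we immediately deduce"), so the in-text "proof" is a citation, and your argument reconstructs precisely what that citation encapsulates. Your route is the natural one and it is sound at every step: (S) makes $\Omega ^{s}$ a standard subset of the standard referential set $X\times X$, so $\preceq $ is a standard relation on $X$; each order axiom (reflexivity, antisymmetry, transitivity, totality) is an internal formula whose only parameters, $X$ and $\Omega ^{s}$, are standard, so transfer legitimately reduces each axiom to standard elements (applied iteratively for the two- and three-variable axioms, which works because fixing a standard variable keeps all parameters standard); the defining clause of (S) identifies $\Omega ^{s}$ with $\Omega $ exactly on standard pairs; and since every standard $x\in X$ lies in $G$ by hypothesis and an ordered pair of standard elements is standard, each axiom collapses to the corresponding property of the total order $i\leq j$ on the index set $\left\{ 1,...,N\right\} $. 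You were also right to isolate the two bookkeeping points that carry the weight: the equivalence $z\in \Omega ^{s}\iff z\in \Omega $ may be invoked only at standard $z$, and the enumeration $\beta _{1},...,\beta _{N}$ must be injective so that a standard pair determines its indices (implicit in the paper's presentation of $G$ as an $N$-element set). In short: your proof is correct and supplies the verification that the paper outsources to \cite{boudaoud}, rather than diverging from the paper's approach; the same standardization-plus-transfer mechanism is what the paper then reuses explicitly in its proof of Szpilrajn's theorem and in Propositions 6--8.
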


\section{Main results}
\begin{theorem}(Szpilrajn $\left[ 1930\right] $ \textbf) \cite{Egbert} \textbf{.} Let $\left( P,\leq \right) $\ be a poset. Then
there exists a linear order $\leq _{\ast }$\ on $P$\ which contains $\leq $,
a so-called linear extension of $\leq $. In particular: if $a$\ and $b$\ are
incomparable elements, this linear order can be chosen in such a way that $%
a\leq _{\ast }b$\ holds.\newline
\end{theorem}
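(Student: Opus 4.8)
The plan is to derive Szpilrajn's theorem from the preceding theorem (the standard total order built from a near-standard finite slice of the ambient set), combined with the elementary combinatorial fact that every \emph{finite} poset admits a linear extension. First I would apply the transfer principle to reduce to the case where $(P,\leq)$ is standard: the assertion ``$(P,\leq)$ is a poset $\Rightarrow$ it admits a linear extension'' is an internal formula with no nonstandard parameters (it reads $\exists L\subseteq P\times P$ such that $L$ is a total order and $\leq\,\subseteq L$, whose matrix is first order), so it holds for every poset as soon as it holds for every standard one. If the standard poset $P$ is finite the claim is just the finite combinatorial lemma, so I may assume $P$ is a standard infinite set.

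Next, by idealization choose a finite $F=\{\beta_{1},\dots,\beta_{N}\}\subseteq P$ with $N\approx+\infty$ containing every standard element of $P$ (this is the tool already invoked in the Introduction). The restriction $\leq|_{F}$ makes $(F,\leq|_{F})$ a finite poset, and here I would use the classical fact that a finite poset can be enumerated so that the enumeration is a linear extension, i.e.\ the indexing may be chosen so that $\beta_{i}\leq\beta_{j}\Rightarrow i\leq j$ (repeated extraction of a minimal element / topological sort). Feeding \emph{this} enumeration into the preceding theorem, set $\Omega=\{(\beta_{i},\beta_{j}):i\leq j\}$ and let $\preceq$ be the relation defined by $\Omega^{s}$. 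By that theorem $\preceq$ is a standard total order on $P$, so only the containment $\leq\,\subseteq\,\preceq$ remains.

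For the containment I would argue on standard pairs and then transfer. Let $x,y$ be standard with $x\leq y$; both lie in $F$, say $x=\beta_{i}$ and $y=\beta_{j}$, and since the enumeration is a linear extension we get $i\leq j$, hence $(x,y)\in\Omega$. As $(x,y)$ is a standard element of $P\times P$, the defining property of standardization yields $(x,y)\in\Omega^{s}$, that is $x\preceq y$. Thus the internal formula $\forall x\,\forall y\,(x\leq y\Rightarrow x\preceq y)$, whose parameters $\leq$ and $\preceq=\Omega^{s}$ are both standard, holds for all standard $x,y$; by transfer it holds for all $x,y$, so $\leq\,\subseteq\,\preceq$, and $\leq_{\ast}\,=\,\preceq$ settles the first assertion.

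For the ``in particular'' clause, reduce once more by transfer to standard incomparable $a,b$. When building the linear extension of $(F,\leq|_{F})$ I would first adjoin the single relation $a<b$: since $a,b$ are incomparable we have $b\not\leq a$, so the transitive closure of $\leq|_{F}\cup\{(a,b)\}$ stays antisymmetric (a cycle would force $b\leq a$ in the original order), giving a partial order that I then extend to a total order on $F$ placing $a$ before $b$. Enumerating $F$ accordingly makes $a=\beta_{i}$, $b=\beta_{j}$ with $i<j$, so $(a,b)\in\Omega$ and, $(a,b)$ being standard, $(a,b)\in\Omega^{s}$, i.e.\ $a\preceq b$. I expect the main obstacle to be conceptual rather than computational: one must keep in mind that only $\Omega^{s}$ is standard while $\Omega$ itself depends on the nonstandard slice $F$, so the extension property can be checked directly only on standard pairs — the promotion to all pairs is precisely what transfer provides, and getting this bookkeeping right is the crux of the argument.
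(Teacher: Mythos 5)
Your proposal is correct and takes essentially the same route as the paper: transfer to standard $P$ (and $a,b$), pass to a finite $F$ containing all standard elements, linearly extend the restricted order on $F$ with $a$ placed before $b$, standardize the graph of the resulting chain, verify $\leq\,\subseteq\,\preceq$ on standard pairs, and promote by transfer. The only difference is cosmetic: where the paper invokes Harzheim's two lemmas as black boxes (adjoining the pair $(a,b)$ to a partial order, and extending a finite order to a linear one), you re-prove them inline via the transitive-closure and topological-sort arguments.
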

\begin{proof}
In order to prove this result we need the
following lemmas whose proofs are without use AC \cite{Egbert}.\newline

\begin{lemma}

$\left[ \cite{Egbert}, \text{ p. 53}\right]  \textit{.} $ \textit{Let }$%
\left( P_{1},R_{1}\right) $\textit{\ be a poset, }$a$\textit{\ and }$b$%
\textit{\ two incomparable elements of }$P_{1}$\textit{. Then there exists
an order relation }$\widetilde{R_{1}}$\textit{\ on }$P_{1}$\textit{\ which
contains }$R_{1}$\textit{\ and in which }$a\widetilde{R_{1}}b$\textit{\
holds.}
\end{lemma}
\begin{lemma}

$\left[ \cite{Egbert}, \text{, p. 27}\right] $ \textit{.} Every order $%
R_{2}$ on a finite set $P_{2}$ is a subset of a linear order on this set. In
other words: Every order relation on a finite set is extendible to a linear
order.
\end{lemma}

\noindent Hence, lemma 5 is a partial statement of the theorem of Szpilrajn.

Return to the proof in question. By transfer, we may assume that $P$, $\leq $%
, $a$ and $b$ are standard. We distinguish the two following cases.\newline

\noindent \textbf{A)} $P$ \textit{is finite.} By lemma 4, there exists an
order relation $\widetilde{\leq }$ on $P$ which contains $\leq $ and in
which $a\widetilde{\leq }b$ holds. Now, by lemma 5 \textit{there exists a
linear order }$\widetilde{\widetilde{\leq }}$ which contains $\widetilde{%
\leq }$. By taking $\widetilde{\widetilde{\leq }}$ for $\leq _{\ast }$, we
finish the proof for this case. Indeed, \textit{since }$\widetilde{%
\widetilde{\leq }}$ is \textit{a linear order, }$\leq _{\ast }$is also.
Since $\leq \subset \widetilde{\leq }\subset \widetilde{\widetilde{\leq }}%
=\leq _{\ast }$, $\leq _{\ast }$\textit{\ contains }$\leq $. Moreover, the
fact that $a\widetilde{\leq }b$ entails, since $\widetilde{\leq }\subset
\widetilde{\widetilde{\leq }}$, $a\widetilde{\widetilde{\leq }}b$ i.e. $%
a\leq _{\ast }b$.

\noindent \textbf{B)} $P$ \textit{is infinite. }Let $P_{1}=\left\{
y_{1},y_{2},...,y_{N}\right\} $ be a finite subset of $P$ containing all
standard elements of $P$, where $N$ is necessarily an unlimited integer. In
particular, $a$ and $b$ belong to $P_{1}$, since they are standard.
Now, let us consider the poset $\left( P_{1}\text{,}\leq \uparrow
P_{1}\right) $ and denote the order $\leq \uparrow P_{1}$ by $R_{1}$, where $%
\leq \uparrow P_{1}$ is the restriction of $\leq $ to $P_{1}$.

Then by lemma 4 there exists an order relation $\widetilde{R_{1}}$\ on $%
P_{1} $\ which contains $R_{1}$\ and in which $a\widetilde{R_{1}}b$\ holds.
Again, by lemma 5 there exists a linear order $R$ on $P_{1}$ which contains $%
\widetilde{R_{1}}$. Then $aRb$.

Since $\left( P_{1},R\right) $ is a chain, we can put $P_{1}=\left\{
x_{1},x_{2},...,x_{N}\right\} $ ($x_{i}\in \left\{
y_{1},y_{2},...,y_{N}\right\} $ for $1\leq i\leq N$), such that $x_{1}Rx_{2}$%
, $x_{2}Rx_{3}$, $x_{3}Rx_{4}$, $...$, $x_{N-1}Rx_{N}$. We notice here that $%
a$ (resp. $b$) corresponds to $x_{i_{1}}$ (resp. to $x_{i_{2}}$), where $i_{1}$%
, $i_{2}$ are in $\left\{ 1,2,...,N\right\} $ with $x_{i_{1}}Rx_{i_{2}}$ since $%
a\widetilde{R_{1}}b$. Let $G=\left\{ \left( x_{i},x_{j}\right) _{N\geq i\geq
1\text{ },\text{ }N\geq j\geq i}\mid x_{i},x_{j}\in P_{1}\right\} $ be the
graph of $\left( P_{1},R\right) $, where saying that $\left(
x_{i},x_{j}\right) \in G$ is the same to saying that $x_{i}Rx_{j}$. Let $%
G^{s}\subset P\times P$ be the standardization of $G$. It is known that $%
G^{s}$ defines a relation $\Gamma $ from $P$ to $P$ as follows%
\begin{equation*}
x\Gamma y\text{ iff }\left( x,y\right) \in G^{s}\text{.}
\end{equation*}

\noindent According to theorem 2, $\Gamma $ is a total order on $P$.

Now, we prove the remainder of the theorem. Let $(x,y)\in P^{2}$ be a
standard element verifying $x\leq y$. Then we have successively $xR_{1}y$, $x%
\widetilde{R_{1}}y$, $xRy$. Then $\left( x,y\right) \in G$ and therefore $%
\left( x,y\right) \in G^{s}$ because $(x,y)$ is standard. Hence $x\Gamma y$.
So

\begin{equation*}
\forall ^{st}\left( x,y\right) \in P^{2}\left[ \left( x\leq y\right)
\Longrightarrow x\Gamma y\right] \text{.}
\end{equation*}%
Then by transfer, $\forall \left( x,y\right) \in P^{2}\left[ \left( x\leq
y\right) \Longrightarrow x\Gamma y\right] $. Hence, $\Gamma $ contains%
\textit{\ }$\leq $. From what precedes, we have concerning $\left(
a,b\right) $: $aRb$. This shows that $\left( a,b\right) \in G$ i.e. $\left(
a,b\right) \in G^{s}$ since $\left( a,b\right) $ is standard. Consequently, $%
a\Gamma b$. Now, by taking $\Gamma $ for $\leq _{\ast }$ we finish the proof
for this case. By transfer, we conclude for all $P$.\newline
\end{proof}
\newline

\begin{proposition}
Let $X$ be any set. Suppose that $A$ and
$B$ are nonempty disjoint subsets of $X$. Then we can provide $X$ by a total
ordering $\preceq $ such that

\begin{equation*}
\forall x,y\in X\left[ \left( x\in A\text{ and }y\in B\right) \implies
x\preceq y\right] \text{.}
\end{equation*}%
\end{proposition}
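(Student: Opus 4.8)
The plan is to obtain $\preceq$ as a linear extension, in the sense of Theorem 3, of a carefully chosen partial order on $X$ that already encodes the requirement that every element of $A$ precede every element of $B$. In this way the proposition becomes a direct corollary of Szpilrajn's theorem rather than a fresh nonstandard construction.

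First I would define a relation $\leq_{0}$ on $X$ by declaring $x\leq_{0}y$ if and only if either $x=y$, or $x\in A$ and $y\in B$. The crucial point, and the only place where the hypotheses on $A$ and $B$ enter, is that $\leq_{0}$ is a partial order on $X$. Reflexivity is immediate. For antisymmetry, if $x\leq_{0}y$ and $y\leq_{0}x$ with $x\neq y$, then $x\in A\cap B$, which is impossible since $A\cap B=\emptyset$; hence $x=y$. For transitivity, suppose $x\leq_{0}y$ and $y\leq_{0}z$: the cases $x=y$ or $y=z$ reduce immediately to a single one of the two hypotheses, while if $x\neq y$ and $y\neq z$ then $y\in B$ (from the first relation) and $y\in A$ (from the second), again contradicting $A\cap B=\emptyset$; so this configuration never arises and transitivity holds. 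Disjointness of $A$ and $B$ is precisely what excludes these two bad configurations.

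Next I would apply Theorem 3 to the poset $(X,\leq_{0})$, obtaining a total order $\preceq$ on $X$ with $\leq_{0}\subseteq\preceq$. Then for arbitrary $x\in A$ and $y\in B$ we have $x\leq_{0}y$ by definition, whence $x\preceq y$ because $\preceq$ contains $\leq_{0}$; this is exactly the asserted implication. The nonemptiness of $A$ and $B$ plays no role in the construction itself and is required only so that the conclusion is not vacuous.

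The genuinely delicate step is therefore not the extension but the verification that $\leq_{0}$ is antisymmetric and transitive, where the disjointness hypothesis is indispensable; everything else is supplied by Theorem 3. If one instead preferred to remain inside the nonstandard machinery of the paper, the same relation $\leq_{0}$ could be restricted to a finite subset $P_{1}\subseteq X$ containing all standard elements, linearly extended on $P_{1}$ by Lemma 5, and then standardized by means of Theorem 2. In that variant the main obstacle would shift to a transfer argument guaranteeing that the ``$A$ before $B$'' property, once established for standard pairs, propagates to all pairs of $X$, exactly as in the final paragraph of the proof of Theorem 3.
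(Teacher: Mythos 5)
Your proposal is correct, but it takes a genuinely different route from the paper's proof. The paper never reduces the proposition to Szpilrajn's theorem: instead it reruns the nonstandard construction directly, assuming by transfer that $X$, $A$, $B$ are standard, choosing a finite set $F=\left\{ \beta _{1},...,\beta _{N}\right\} \subset X$ containing all standard elements of $X$, enumerating $F$ so that the elements of $A\cap F$ come before those of $B\cap F$, and then invoking Theorem 2 to standardize the graph of the resulting chain into a standard total order on $X$, with a final (implicit) transfer from standard pairs to all pairs. You instead manufacture the auxiliary relation $\leq _{0}$ given by $x\leq _{0}y$ iff $x=y$ or ($x\in A$ and $y\in B$), verify that disjointness of $A$ and $B$ is exactly what gives antisymmetry and transitivity, and then quote Theorem 3 as a black box to extend $\leq _{0}$ to the desired total order $\preceq $; this verification is correct, and the conclusion $x\preceq y$ for $x\in A$, $y\in B$ is immediate since $\leq _{0}\subset \preceq $. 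Your reduction buys modularity and economy: it is entirely classical given Theorem 3, needs no further appeal to standardization or transfer, and makes visible that the nonemptiness hypothesis is superfluous. The paper's route buys fidelity to its stated program: the total order is exhibited concretely as the standardization of an explicit finite enumeration, in keeping with the paper's aim of showcasing the IST construction, and it directly produces a standard total order on the standard set $X$. Your closing remark correctly anticipates the paper's own variant, with the small difference that the paper arranges the enumeration of $F$ by hand rather than restricting $\leq _{0}$ to $F$ and extending via Lemma 5, and the transfer step you identify as the delicate point is precisely the one the paper compresses into its one-line conclusion ``by theorem 2 and transfer we finish the proof.''
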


\begin{proof}
By transfer, we suppose $X$, $A$ and $B$ are
standard. Let%
\begin{equation*}
F=\left\{ \beta _{1},\beta _{2},...,\beta _{N}\right\} \subset X
\end{equation*}%
be a finite subset containing all standard elements of $X$. Suppose that
this numbering of elements of $F$ has been made so that elements of $A\cap F$
are before those of $B\cap F$. Now, by theorem 2 and transfer we finish
the proof.
\end{proof}

\begin{proposition}
Let $X$ be any set. Let $S$ be a system
of mutually disjoint subsets of $X$. Then we can provide $X$ by a total
ordering such that

\begin{equation*}
\forall A,B\in S\left[ A<B\text{ or }B<A\right] \text{.}
\end{equation*}%
\end{proposition}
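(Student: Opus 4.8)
The plan is to mimic the proof of Proposition 6, replacing the two-block partition by the block decomposition induced by the whole family $S$. First I would invoke transfer to assume that $X$ and $S$ are standard, and introduce a finite subset $F=\left\{ \beta _{1},\beta _{2},...,\beta _{N}\right\} \subset X$ $\left( N\approx +\infty \right) $ containing all standard elements of $X$. Since the members of $S$ are mutually disjoint and $F$ is finite, each element of $F$ lies in at most one member of $S$, so $F$ meets only finitely many members of $S$, say $A_{1},...,A_{k}$ (those with $A_{i}\cap F\neq \emptyset $). I would then choose the enumeration of $F$ so that the finitely many blocks $A_{1}\cap F,A_{2}\cap F,...,A_{k}\cap F$ occur as consecutive, non-overlapping runs of indices, in this fixed order, with the elements of $F$ lying in no member of $S$ placed arbitrarily (say, last). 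Applying Theorem 2 to this enumeration produces a standard total ordering $\preceq $ on $X$.

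It then remains to check the stated property. The statement \textquotedblleft $A<B$ or $B<A$\textquotedblright , where $A<B$ abbreviates $\forall a\in A\,\forall b\in B\,\left( a\prec b\right) $, has only standard parameters ($S$ and the standard relation $\preceq $), so by transfer it suffices to verify it for standard $A,B\in S$. Fix such a pair with $A\neq B$; if either is empty the disjunction holds vacuously, so assume both nonempty. Being standard and nonempty, each of $A,B$ contains a standard element, which necessarily lies in $F$; hence $A$ and $B$ both occur among $A_{1},...,A_{k}$, at two distinct positions. Say the block of $A$ precedes the block of $B$. I then claim $A<B$: this is again an internal statement with standard parameters, so by transfer I only need $a\prec b$ for standard $a\in A$ and standard $b\in B$. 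But such $a,b$ lie in $F$, inside the blocks of $A$ and $B$ respectively, so the index of $a$ is strictly smaller than that of $b$; since membership of a standard pair in $\Omega ^{s}$ coincides with membership in $\Omega $, this gives $a\preceq b$, hence $a\prec b$ as $A\cap B=\emptyset $ forces $a\neq b$. Transfer upgrades this to $A<B$ for all elements, and symmetrically, if the block of $B$ comes first, we obtain $B<A$. A final transfer over the standard pairs yields $\forall A,B\in S\,\left( A<B\text{ or }B<A\right) $.

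The construction itself is elementary; the delicate point is the interplay of quantifiers with the nonstandard machinery. The condition $A<B$ ranges over \emph{all} --- including nonstandard --- elements of $A$ and $B$, whereas the enumeration of $F$ only controls standard elements; bridging this gap is exactly what the inner transfer on a fixed standard pair accomplishes, and it is essential that the disjunction \textquotedblleft $A<B$ or $B<A$\textquotedblright\ be resolved for each pair \emph{before} transferring, rather than distributed inside the quantifiers, where it would collapse to the trivially true totality of $\preceq $. I expect the main obstacle to be stating these nested transfers cleanly and verifying that every standard member of $S$ genuinely meets $F$, so that the finite block ordering already decides every standard pair.
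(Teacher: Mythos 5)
Your proposal is correct and follows essentially the paper's own route: rearrange a hyperfinite set $F$ containing all standard elements of $X$ into consecutive blocks indexed by the members of $S$ (with the leftover elements placed at the end), apply Theorem 2 to this enumeration, and conclude by transfer. The only deviation is minor --- where the paper extracts a second hyperfinite family $\left\{ A_{1},A_{2},...,A_{\omega _{2}}\right\} \subset S$ containing all standard members of $S$, you instead take the family of members of $S$ meeting $F$, whose internal finiteness you correctly derive from disjointness --- and your explicit nested transfers merely spell out the verification the paper compresses into ``by theorem 2 and transfer we finish the proof.''
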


\begin{proof}
Assume, by transfer, that $X$ and $S$ are
standard. Let $F=\left\{ x_{1},x_{2},...,x_{\omega _{1}}\right\} $, $\omega
_{1}\cong +\infty $ (resp. $S=\left\{ A_{1},A_{2},...,A_{\omega
_{2}}\right\} $, $\omega _{2}\cong +\infty $ ) be a finite subset of $X$
(resp. of $S$) containing all standard elements of $X$ (resp. of $S$). For $%
j=1,2,...,\omega _{2}$, we put
\begin{equation*}
F\cap A_{j}=\left\{ y_{j,1},y_{j,2},...,y_{j,n_{j}}\right\}
\end{equation*}%
which is a finite set containing all standard elements of $A_{j}$. Put $%
L=F\setminus \underset{j=1}{\overset{\omega _{2}}{\cup }}\left( F\cap
A_{j}\right) =\left\{ y_{1},y_{2},...,y_{s}\right\} $.

Now, we rearrange elements of $F$ as follows : We begin by elements of $%
F\cap A_{1}$, after those of $F\cap A_{2}$, after those of $F\cap A_{3}$ and
so on. Thus $F$ becomes%
\begin{eqnarray*}
F &=&\left\{
y_{1,1},y_{1,2},...,y_{1,n_{1}},y_{2,1},y_{2,2},...,y_{2,n_{2}},...,y_{%
\omega _{2},1},y_{\omega _{2},2},...,y_{\omega _{2},n_{\omega _{2}}}\right\}
\\
&&\text{ \ \ \ \ \ \ \ \ \ \ \ \ \ \ \ \ \ \ \ \ \ \ \ \ \ \ \ \ \ \ \ \ \ \
\ \ \ \ \ \ \ \ \ \ \ \ \ \ \ \ \ \ \ \ \ \ \ \ \ \ \ \ \ \ }\cup \left\{
y_{1},y_{2},...,y_{s}\right\}
\end{eqnarray*}%
where $\overset{\omega _{2}}{\underset{j=1}{\sum }}n_{j}+s=\omega _{1}$.
Now, by theorem 2 and transfer we finish the proof.
\end{proof}

\begin{proposition}
Let $X$ and $Y$ be any sets. If $X\sim Y$
then we can provide $X\cup Y$ by a total ordering such that $X$ becomes
strictly dense in $Y$.
\end{proposition}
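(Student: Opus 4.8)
The plan is to reuse the nonstandard template of Theorem 2 and Propositions 6 and 7: pass to standard data by transfer, construct an appropriate finite interleaving of the two sets, apply Theorem 2 to obtain a standard total order on $X\cup Y$, and then verify the density clause by transfer. We may assume $X$ and $Y$ are disjoint (otherwise the statement degenerates), and by transfer we suppose $X$, $Y$ and a bijection $f:X\longrightarrow Y$ witnessing $X\sim Y$ are standard.

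First I would pick a finite subset $F_{X}=\{x_{1},x_{2},\dots ,x_{N}\}\subset X$ containing all standard elements of $X$, with $N$ necessarily unlimited, and put $y_{i}=f(x_{i})$, $F_{Y}=f(F_{X})=\{y_{1},\dots ,y_{N}\}$. The key point is that $F_{Y}$ then contains every standard element of $Y$: if $y\in Y$ is standard, then $f^{-1}(y)$ is standard because $f$ is standard, hence $f^{-1}(y)\in F_{X}$ and so $y\in F_{Y}$. Thus $F=F_{X}\cup F_{Y}$ is a finite subset of $X\cup Y$ containing all of its standard elements. I would then enumerate $F$ in the interleaved order $y_{1},x_{1},y_{2},x_{2},\dots ,y_{N},x_{N}$, placing each $x_{i}$ immediately after its partner $y_{i}$, and feed this enumeration into Theorem 2. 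This yields a standard total ordering $\preceq $ on $X\cup Y$ whose graph is the standardization $G^{s}$ of the graph $G$ of the finite order; recall that on standard pairs $G^{s}$ coincides with $G$, i.e. for standard $u,v\in F$ we have $u\preceq v$ precisely when $u$ precedes $v$ in the enumeration.

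It remains to check that $X$ is strictly dense in $Y$. The assertion $\forall a,b\in Y\,[\,a\prec b\Rightarrow \exists c\in X\,(a\prec c\prec b)\,]$ is internal with standard parameters, so by transfer it suffices to treat standard $a,b\in Y$ with $a\prec b$. Writing $a=y_{i_{a}}$ and $b=y_{i_{b}}$, the relation $a\prec b$ forces $i_{a}<i_{b}$; I would then take $c:=f^{-1}(a)=x_{i_{a}}$, which is standard and lies in $X$, and occupies the enumeration slot $2i_{a}$, strictly between the slot $2i_{a}-1$ of $a$ and the slot $2i_{b}-1$ of $b$. Since $a$, $c$, $b$ are all standard, the agreement of $G^{s}$ with $G$ gives $a\prec c\prec b$, exhibiting the required witness and finishing the proof.

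The hard part is precisely this last verification rather than the construction. The subtlety is that the indices $i_{a},i_{b}$ locating $a$ and $b$ inside the unlimited set $F$ need not themselves be standard, so no direct argument about them is available; what rescues the proof is that the chosen witness $c=f^{-1}(a)$ is standard (because $f$ is) and is placed adjacent to $a$, so that the standardization's agreement-on-standard-elements property applies to the triple $(a,c,b)$ even though their positions in $F$ are unlimited.
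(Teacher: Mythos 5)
Your proposal is correct and takes essentially the same route as the paper: the paper likewise transfers to standard data with a standard bijection (there $\varphi :Y\longrightarrow X$), forms the interleaved finite set $\left\{ y_{1},\varphi \left( y_{1}\right) ,y_{2},\varphi \left( y_{2}\right) ,...,y_{N},\varphi \left( y_{N}\right) \right\}$ containing all standard elements of $X\cup Y$, applies Theorem 2 to get the standard order $G^{s}$, and verifies density for standard $\alpha \prec \beta$ in $Y$ using the bijection-partner $\varphi \left( \alpha \right)$ as the standard witness before concluding by transfer. Your differences (direction of the bijection, starting from $F_{X}$ rather than $F_{Y}$, the explicit slot arithmetic, and the sensible disjointness remark) are cosmetic refinements of the same argument.
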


\begin{proof}
By transfer, we may assume that $X$ and $Y$ are
standard. Since $X\sim Y$ then there exists a mapping $\varphi
:Y\longrightarrow X$ which is one-one and onto. By transfer $\varphi $ is
standard. Let $F_{Y}=\left\{ y_{1},y_{2},...,y_{N}\right\} $ be a finite
subset of $Y$ containing all standard elements of $Y$. Then $\varphi \left[
F_{Y}\right] =\left\{ \varphi \left( y_{1}\right) ,\varphi \left(
y_{2}\right) ,...,\varphi \left( y_{N}\right) \right\} $ is a finite subset
of $X$ containing all standard elements of $X$. Let us put

\begin{center}
$F_{X\cup Y}=%
\begin{array}{c}
\left\{ y_{1},\varphi \left( y_{1}\right) ,y_{2},\varphi \left( y_{2}\right)
,y_{3},\varphi \left( y_{3}\right) ,y_{4},\varphi \left( y_{4}\right)
,\right. \\
\text{ \ \ \ \ \ \ \ \ \ \ \ \ \ \ \ \ \ \ \ \ \ \ \ \ }\left. ...,\varphi
\left( y_{N-1}\right) ,y_{N},\varphi \left( y_{N}\right) \right\}%
\end{array}%
$.
\end{center}

\noindent Then $F_{X\cup Y}$ is a finite subset of $X\cup Y$ containing all
the standard elements of $X\cup Y$. As well as in the theorem 2, we
construct from $F_{X\cup Y}$ the set $G$ and thereafter the set $G^{s}$
which is a standard linear order in $X\cup Y$.

Let $y_{i}$ and $y_{j}$ be standard elements of $Y$ such that $y_{i}\prec
y_{j}$ in $G^{s}$ which is equivalent to $i<j$. Then we have $y_{i}<\varphi
\left( y_{i}\right) <y_{j}$, where $\varphi \left( y_{i}\right) $ is a
standard element of $X$. Therefore%
\begin{equation*}
\forall ^{st}\left( \alpha ,\beta \right) \in Y^{2}\exists ^{st}\gamma \in X
\left[ \alpha <\beta \implies \alpha <\gamma <\beta \right] \text{.}
\end{equation*}%
Which, by transfer, entails $\forall \left( \alpha ,\beta \right) \in Y^{2}$
$\exists \gamma \in X$ $\left[ \alpha <\beta \implies \alpha <\gamma <\beta %
\right] $. Consequently $X$ is strictly dense in $Y$, where $X$ and $Y$ are
subsets of $X\cup Y$ which is linearly ordered by $G^{s}$.
\end{proof}

%%%%%%%%%%%%%%%%%%%%%%%%%%%%%%

\end{document}